\newcommand{\as}{\text{a.s.}}
\renewcommand{\P}{\mathrm{P}}
\newcommand{\E}{\mathrm{E}}
\newcommand{\I}{\mathrm{I}}
\newcommand{\F}{\mathcal{F}}
\newcommand{\FF}{\mathbb{F}}
\newcommand{\R}{\mathbb{R}}
\renewcommand{\hat}{\widehat}
\renewcommand{\epsilon}{\varepsilon}
\newcommand{\bl}{\bar\lambda}
\newcommand{\bn}{\bar\nu}
\newcommand{\bW}{\bar W}
\newcommand{\hl}{\hat\lambda}
\newcommand{\simp}{\Delta}
\newtheorem{theorem}{Theorem}
\newtheorem{proposition}{Proposition}
\theoremstyle{definition}
\newtheorem{definition}{Definition}
\newtheorem{remark}{Remark}
\newtheorem{assumption}{Assumption}
\begin{document}
\title{On convergence of forecasts in prediction markets}
\author[1]{Nina Badulina}
\author[1]{Dmitry Shatilovich}
\author[2]{Mikhail Zhitlukhin%
\thanks{Corresponding author. E-mail: mikhailzh@mi-ras.ru.}}
\affil[1]{Lomonosov Moscow State University}
\affil[2]{Steklov Mathematical Institute, Moscow}
\date{}

\onehalfspacing
\maketitle

\begin{abstract}
We propose a dynamic model of a prediction market in which agents predict the values of a sequence of random vectors.
The main result shows that if there are agents who make correct (or asymptotically correct) next-period forecasts, then the aggregated market forecasts converge to the next-period conditional expectations of the random vectors.

\smallskip \textit{Keywords:} prediction markets, survival strategies, martingale convergence.
\end{abstract}

\section{Introduction}

Prediction markets are artificial markets designed for extracting information scattered among traders.
In such markets, agents buy and sell contracts tied to outcomes of future events. 
The market price of a contract can be used as a forecast of the probability of the event.
Currently operating prediction markets include, for example, the Iowa Electronic Market and Metaculus (the latter positions itself not as a prediction market in the strict sense, but as a prediction aggregator).
In the past, prediction markets were used to forecast results of the US presidential elections before the development of population polls \citep{RhodeStrumpf04}.
It has been observed that prediction markets can provide accurate forecasts and outperform traditional statistical methods \citep{BergRietz03}.
Literature surveys on prediction markets can be found in \cite{TziralisTatsiopoulos07,HornIvens14}.

This paper provides a mathematical study of convergence of forecasts elicited from prediction markets to true probabilistic characteristics such as probabilities of random events or expected values of random variables. 
We consider a dynamic game in which agents predict the values of a sequence of random vectors.
We show that if there are agents who make correct (or asymptotically correct) next-period forecasts, then the market mechanism rewards those agents, which results in that the aggregated market forecasts converge to the next-period conditional expectations of the random vectors as time $t\to\infty$. 
Agents who make incorrect predictions eventually have vanishing impact on the aggregated forecasts. 

The problem of accuracy of forecasts in predication markets has been extensively studied in the literature.
\cite{WolfersZitzewitz04,Manski06,WolfersZitzewitz06a}, among others, considered static (single-period) models. Dynamic models were studied by \cite{Pennock04,BeygelzimerLangford12,KetsPennock14,BottazziGiachini19}. 
The majority of theoretical works in this direction obtain results under the assumption that agents choose strategies from a particular class, e.g.\ maximize certain utility functions.
From a practical point of view, this might be not desirable since this requires to know unobservable agents' characteristics in order to interpret market forecasts.
The novelty of our model consists in that we show that convergence to true conditional expectations takes place regardless of strategies used by agents.
The only assumption we impose is that the market contains agents who make correct forecasts.
(Let us clarify: we do not prove that a prediction market can provide a more accurate forecast than any of its agent. Our goal is to show that it is possible to extract correct forecasts without knowing who of the agents is right.) 

The paper is organized as follows.
In Section~\ref{sec:model} we describe the model. Section~\ref{sec:main} presents the main results of the paper, first in the general case and then  provides a refinement for a stationary version of the model.
These results are obtained for a discrete-time setting with one-shot forecasts of the next-period random vectors.
In Section~\ref{sec:extended}, we show how the basic model can be extended to continuous-time to allow dynamic forecasts.

\section{The model}
\label{sec:model}

We consider a game played by $M$ agents who pursue the goal to earn money by making forecasts of the value of an $N$-dimensional random vector in each round of an infinite sequence of rounds. 

The game is played in discrete time.
The information flow is modelled by a probability space with a discrete-time filtration $\FF=(\F_t)_{t\ge 0}$.
The random vectors which the agents try to predict form an adapted sequence $X=(X_t)_{t\ge 1}$, where $X_t=(X_t^1,\dots,X_t^N)$.
The components $X_t^n$ are non-negative and sum up to 1 for each $t$, so $X_t$ assumes values in the standard $N$-simplex $\simp^N = \{x\in\R_+^N : x^1+\ldots+x^N=1\}$.
A basic example is a vector of indicators of $N$ random events forming a partition of the probability space; for other examples, see Remark~\ref{variants} below.

We will always assume that the next-period conditional expectations of $X_t^n$ are bounded away from zero, i.e.\ there exists a constant $\epsilon>0$ such that for all $t\ge 0$ and $n=1,\dots,N$
\begin{equation}
\label{positive-x}
\E(X_{t+1}^n \mid \F_t) \ge \epsilon.
\end{equation}

The agents enter the game at time $t=0$ with strictly positive non-random initial wealth $W_0^m$, $m=1,\dots,M$.
The wealth $W_t^m$ at further moments of time is random and determined by the dynamics described below.

At each moment of time $t\ge 0$, an agent can make bets on the components of the vector $X_{t+1}$.
Let $h_t^{mn}$ denote the amount of money bet by the $m$-th agent on the $n$-th component. 
These amounts are assumed to be non-negative and their sum must not exceed the wealth of the agent ($\sum_{n=1}^M h_t^{mn} \le W_t^m$).
By $h_t^{m0} := W_t^m - \sum_{n=1}^n h_t^{mn}$, we denote the portion wealth of this agent which remains after he/she places the bets.

At time $t+1$, the value of $X_{t+1}$ becomes known and the agents get paid according to the following scheme.
The pool of wagered money $w_t := \sum_{mn}h_t^{mn}$ is divided into $N$ parts $V_{t+1}^n := w_t X_{t+1}^n$ and each part is distributed among the agents proportionally to their bets on the corresponding component of $X_{t+1}$.

As a result, the dynamics of an agent's wealth is governed by the equation
\begin{equation}
\label{capital-h}
W_{t+1}^m = \sum_{n=1}^N \frac{h_t^{mn}}{\sum_{k=1}^M  h_t^{kn}} V_{t+1}^n + h_t^{m0}.
\end{equation}
In what follows, instead of the variables $h_t^{mn}$, it will be more convenient to work with the variables $\nu_t^m = \sum_{n=1}^N h_t^{mn} / W_t^m$ and $\lambda_t^{mn} = h_t^{mn} / \sum_{n=1}^N h_t^{mn}$.
The former represents the proportion of money that agent $m$ allocates for betting, and the latter are equal to the fraction of money bet on the $n$-th component of $X_{t+1}$.
Then equation \eqref{capital-h} takes the form
\begin{equation}
\label{capital}
W_{t+1}^m = \sum_{k=1}^M \nu_t^k W_t^k 
\cdot \sum_{n=1}^N \frac{\nu_t^m\lambda_t^{mn}  W_t^m}{\sum_{k=1}^M \nu_t^k\lambda_t^{kn}  W_t^k} X_{t+1}^n 
+ (1-\nu_t^m) W_t^m.
\end{equation}

The pair of sequences $\sigma=(\nu,\lambda)$ defines the strategy of an agent. 
The sequence $\nu=(\nu_t)_{t\ge0}$ is scalar-valued with values in $[0,1]$, while $\lambda=(\lambda_t)_{t\ge 0}$ is vector-valued with values in $\Delta^N$.
We assume that these sequences may depend on the random outcome $\omega$ in an $\F_t$-measurable way, i.e.\ $\nu_t=\nu_t(\omega)$ and $\lambda_t=\lambda_t(\omega)$ are $\F_t$-measurable functions. 

Equations \eqref{capital-h} and \eqref{capital} make sense only if the denominators in these formulas do not vanish. 
The following proposition provides a sufficient condition for that. 

\begin{proposition}
Suppose that some agent (say, agent $m$) uses a strategy $\sigma^m=(\nu^m,\lambda^m)$ such that  $\nu_t^m>0$ and $\lambda_t^{mn}>0$ for all $t\ge0$ and $n=1,\dots,N$.
Then $W_t^m > 0$ and $\sum_{k=1}^M \nu_t^k \lambda_t^{kn}W_t^k>0$ for all $t\ge 0$.
\end{proposition}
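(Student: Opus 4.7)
The plan is a straightforward induction on $t$, simultaneously establishing three claims: (a) $W_t^k \ge 0$ for every agent $k$, (b) $W_t^m > 0$ for the specific agent $m$ in the hypothesis, and (c) the denominator $\sum_{k=1}^M \nu_t^k \lambda_t^{kn} W_t^k > 0$ for every $n$. The base case $t=0$ is immediate: initial wealths are strictly positive by assumption, so (a) and (b) hold, and (c) follows because the single term indexed by $k=m$ equals $\nu_0^m \lambda_0^{mn} W_0^m > 0$ while every other term is non-negative.

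For the inductive step, assume (a)--(c) at time $t$. Property (c) then guarantees that the formula \eqref{capital} is well defined at time $t$. Non-negativity (a) for $W_{t+1}^k$ is visible term by term from \eqref{capital}: each summand on the right is a product of non-negative quantities ($\nu_t^k, \lambda_t^{kn}, W_t^k, X_{t+1}^n \ge 0$ and $0 \le \nu_t^k \le 1$). Property (c) at time $t+1$ then follows from (b) at time $t+1$ once the latter is established, since $\sum_k \nu_{t+1}^k \lambda_{t+1}^{kn} W_{t+1}^k \ge \nu_{t+1}^m \lambda_{t+1}^{mn} W_{t+1}^m$, and by hypothesis $\nu_{t+1}^m, \lambda_{t+1}^{mn} > 0$.

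Thus the only delicate point, and the main (minor) obstacle, is to verify (b) at time $t+1$, i.e.\ that $W_{t+1}^m > 0$. Writing out \eqref{capital} for $k=m$, we split into two cases. If $\nu_t^m < 1$, then the cash-reserve term $(1-\nu_t^m)W_t^m$ is strictly positive and dominates. If $\nu_t^m = 1$, this term vanishes and we must use the betting term; here we note that the fraction $\nu_t^m\lambda_t^{mn} W_t^m / \sum_k \nu_t^k\lambda_t^{kn}W_t^k$ is strictly positive for each $n$ by (c), while $\sum_k \nu_t^k W_t^k \ge \nu_t^m W_t^m > 0$, so the betting term equals a strictly positive constant times $\sum_n c_n X_{t+1}^n$ with coefficients $c_n > 0$; since $\sum_n X_{t+1}^n = 1$, at least one $X_{t+1}^n$ is positive pointwise, yielding $W_{t+1}^m > 0$. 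This closes the induction and proves the proposition.
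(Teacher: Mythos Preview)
Your proof is correct and follows exactly the route the paper has in mind: the paper merely states that the result ``obviously follows from formula~\eqref{capital} and the assumption that the initial wealth is strictly positive,'' and your induction is precisely the spelling-out of that obvious argument. The case split on $\nu_t^m<1$ versus $\nu_t^m=1$ is in fact unnecessary (the betting term is already strictly positive whenever $\nu_t^m>0$, since $\sum_k \nu_t^k W_t^k \ge \nu_t^m W_t^m>0$ and each ratio is positive), but it does no harm.
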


The proof obviously follows from formula \eqref{capital} and the assumption that the initial wealth is strictly positive. 
Hereinafter, we will always assume that all strategy profiles under consideration satisfy the condition of this proposition.

Observe also that if the denominators in \eqref{capital} are positive, then the total wealth $\bar W_t = \sum_{m=1}^M W_t^m$ remains constant.
Therefore, without loss of generality, we can assume that $\bW_t \equiv 1$ and hence $W_t \in \simp^M$. 

Define the \textit{wealth-weighted strategy} of the agents as $\bar\sigma = (\bn, \bl)$, where
\[
\bn_t = \sum_{m=1}^M \nu_t^m W_t^m, \qquad
\bl_t^n = \frac{1}{\bn_t}\sum_{m=1}^M \nu_t^m \lambda_t^{mn} W_t^m.
\]
Note that $\bn_t\in[0,1]$ and $\bl_t \in \Delta^N$. 
The vector $\bl_t$ can be represented as 
\[
\bl_t^n = \frac{\sum\limits_{m=1}^M h_t^{mn}}{\sum\limits_{m=1}^M \sum\limits_{i=1}^N h_t^{mi}},
\]
and therefore has a simple interpretation: $\bl_t^n$ is the proportion of money bet by all the agents on the $n$-th component of $X_{t+1}$ in the total pool of wagered money.

\begin{definition}
We call $\bl_t$ the \emph{market forecast} of the vector $X_{t+1}$.
\end{definition}
The goal of the remaining part of the paper will be to investigate when $\bl_t$ converges to $\E(X_{t+1}\mid\F_t)$ as $t\to\infty$.
The following notion will play the key role in establishing the convergence.

\begin{definition}
We call a strategy $\sigma^m$ \emph{survival}, if for any strategy profile $\Sigma=(\sigma^1,\dots,\sigma^M)$, which contains this strategy, and any vector of initial wealth $W_0$, it holds that%
\[
\liminf_{t\to\infty} W_t^m > 0\ \as
\]
\end{definition}

From the symmetry of the model, it is clear that the property of survival does not depend on the number $m$ of an agent it is applied to.

The main results in the next section show that the presence of agents using survival strategies  ensures that the marked forecasts converge to the conditional expectations.

\begin{remark}
The notion of a survival strategy is borrowed from the literature in evolutionary finance (see, e.g., \cite{BlumeEasley92,EvstigneevHens+02,AmirEvstigneev+13}) and the model we consider is a modification of the model of a financial market with short-lived assets of \cite{AmirEvstigneev+13}.
For generalizations of this model, see \cite{Zhitlukhin22,Zhitlukhin23,EvstigneevTokaeva23}.
\end{remark}

\begin{remark}
\label{variants}
Let us give examples how one can choose vectors $X_t$ to obtain estimates of various characteristics.
Recall that the components of $X_t$ must sum up to~1.

As mentioned earlier, if random events $A_t^n \in \F_t$ form a partition of the underlying probability space for each $t$, we can put $X_t^n = \I(A_t^n)$, where $\I(\cdot)$ denotes the indicator function.
Then $\bl_t^n$ provides an estimate of the conditional probability $\P(A_{t+1}^n\mid \F_t)$.

If random events $A_t^n$, $n=1,\dots,N$, are not disjoint, the same probability can be estimated by $N \bl_t^n$, if we put $X_t^n = \frac 1N \I(A_t^n)$ and $X_t^{N+1} = 1 - \sum_{n=1}^N X_t^n$ (the additional component is needed to ensure that $X_t$ takes values in $\simp^{N+1}$).

To estimate conditional expectations of a sequence of bounded random variables $\xi_t$, consider the two-dimensional vectors $X_t$ with $X_t^1 = (\xi_t-a_t)/b_t$ and $X_t^2 = 1 -\xi_t$, where $a_t = \mathrm{ess\,inf}_\omega \xi_t(\omega)$, $b_t = \mathrm{ess\,sup}_\omega \xi_t(\omega) - a_t$.
Then $\E(\xi_{t+1}\mid \F_t)$ is estimated by $a_t + b_t \bl_t^1$.
If the random variables are not bounded, one can first truncate them at some levels and then use this construction.

Conditional moments can be estimated in a similar way.
Assume that $\xi_t$ are bounded, and, without loss of generality, take on values in $[0,1]$.
Put $X_t = (X_t^1,\dots, X_t^{N+1})$ with $X_t^n = (\xi_t)^n/N$ for $n=1,\dots,N$ and $X_t^{N+1} = 1 - \sum_{n=1}^N X_t^n$.
Then $N \bl_t^n$ provides an estimate of the $n$-th conditional moment $\E(\xi_{t+1}^n\mid \F_t)$ for $n=1,\dots,N$.
For example, for $N=2$, we can estimate the conditional expectation and conditional variance by $2\bl_t^1$ and $2\bl_t^2 - 4(\bl_t^1)^2$, respectively.
\end{remark}

\section{Main results}
\label{sec:main}

\subsection{The general case}
Denote by $\mu_t=(\mu_t^1,\dots,\mu_t^N)$, $t\ge0$, the vector of conditional expectations
\[
\mu_t^n = \E(X_{t+1}^n\mid \F_t).
\]
Note that $\mu_t^n\ge \epsilon>0$ by assumption~\eqref{positive-x}.

In what follows, let us denote by $\|x\|$ the Euclidean norm of a vector $x\in\R^N$. If $\xi$ is a random vector, then $\|\xi\|$ will denote the random norm, i.e.\ $\|\xi(\omega)\|$.

\begin{theorem}
\label{th1}
1) Consider a strategy $\sigma=(\nu,\lambda)$ such that $\lambda_t^n>0$ for all $t\ge0$ and $n=1,\dots,N$. Then $\sigma$ is survival if
\begin{equation}
\label{survival-condition}
\sum_{t=0}^\infty \|\lambda_t - \mu_t\|^2 < \infty\ \as
\end{equation}
Moreover, for a strategy $\sigma$ such that $\lambda_t^n>0$ and $\nu_t$ is bounded away from zero, condition \eqref{survival-condition} is also necessary for survival.

\smallskip
2) Suppose that in a strategy profile, some agent uses a survival strategy $\sigma=(\nu,\lambda)$ such that its components $\lambda_t^n$ and $\nu_t$ are bounded away from zero.
Then 
\[
\sum_{t=0}^\infty \|\bl_t-\mu_t\|^2 < \infty\ \as
\]
\end{theorem}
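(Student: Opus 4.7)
I plan to work with $Y_t^m := -\log W_t^m$, which is non-negative because $W_t^m \le \bW_t = 1$. From equation~\eqref{capital}, the wealth ratio $r_t^m := W_{t+1}^m/W_t^m$ of the agent using the strategy $\sigma = (\nu,\lambda)$ under consideration equals $(1-\nu_t) + \nu_t\sum_n(\lambda_t^n/\bl_t^n)X_{t+1}^n$. Two successive applications of Jensen's inequality (concavity of $\log$) --- first with the weights $(1-\nu_t,\nu_t)$ on the outer affine combination, then with the probability weights $X_{t+1}^n$ on the inner sum --- yield
\[
\log r_t^m \;\ge\; \nu_t\sum_{n=1}^N X_{t+1}^n\log(\lambda_t^n/\bl_t^n).
\]
Taking the $\F_t$-conditional expectation and using $\E(X_{t+1}^n \mid \F_t) = \mu_t^n$ gives the key Kullback--Leibler bound
\[
\E(Y_{t+1}^m - Y_t^m \mid \F_t) \;\le\; \nu_t\bigl[D(\mu_t\|\lambda_t) - D(\mu_t\|\bl_t)\bigr],
\]
where $D(p\|q) := \sum_n p^n\log(p^n/q^n)$ is the KL divergence.

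\textbf{Part 1, sufficiency.} Under the hypothesis $\sum_t\|\lambda_t - \mu_t\|^2 < \infty$ a.s., the uniform bound $\mu_t^n \ge \epsilon$ from \eqref{positive-x} forces $\lambda_t^n \ge \epsilon/2$ for all but finitely many $t$ a.s., and on that tail the standard second-order estimate gives $D(\mu_t\|\lambda_t) \le C_\epsilon\|\mu_t - \lambda_t\|^2$, so $a_t := \nu_t D(\mu_t\|\lambda_t)$ is a.s.\ summable. The Robbins--Siegmund convergence theorem, applied to the non-negative process $Y_t^m$ with this $a_t$ and with $b_t := \nu_t D(\mu_t\|\bl_t) \ge 0$, yields convergence of $Y_t^m$ to a finite limit a.s. Hence $\liminf_t W_t^m > 0$ a.s.\ (survival), and as a by-product $\sum_t b_t < \infty$ a.s.\ --- a fact to be reused in Part 2.

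\textbf{Part 1 necessity, and Part 2.} For necessity (assuming $\nu_t \ge \delta > 0$), I would complement the Jensen lower bound above with a matching upper bound derived from $\log(1+x) \le x - cx^2$ applied to $x = r_t^m - 1$, handling unboundedness by localisation via stopping times keyed to $W_t^m$ and $\bl_t^n$; after conditioning, this gives $\E(\log r_t^m \mid \F_t) \le -c'\nu_t^2\|\lambda_t - \mu_t\|^2$ up to summable corrections and a martingale term, so divergence of $\sum\|\lambda_t - \mu_t\|^2$ forces $\log W_t^m \to -\infty$, contradicting survival. Part 2 then unfolds as follows: if the survival agent has $\nu$ and $\lambda^n$ bounded below, necessity yields $\sum_t\|\lambda_t - \mu_t\|^2 < \infty$ a.s., so $a_t$ is summable for this agent; the Robbins--Siegmund by-product then gives $\sum_t\nu_t D(\mu_t\|\bl_t) < \infty$, and since $\nu_t \ge \delta$ also $\sum_t D(\mu_t\|\bl_t) < \infty$ a.s. Finally, Pinsker's inequality $\|\mu-\bl\|^2 \le \|\mu-\bl\|_1^2 \le 2D(\mu\|\bl)$ yields the desired conclusion $\sum_t\|\bl_t - \mu_t\|^2 < \infty$ a.s.

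\textbf{Main obstacle.} The genuine technical hurdle is the matching upper bound for Part 1 necessity: the Taylor remainder of $\log(1+x)$ is only cleanly quadratic in a bounded range, whereas $x = r_t^m - 1$ can a priori be large if $\bl_t^n$ is small. A bootstrap using the survival assumption $W_t^m \ge c > 0$ eventually and the identity $\bl_t^n\bn_t \ge \nu_t\lambda_t^n W_t^m$ (or a localisation argument) provides the required uniform control on the survival event. Once this is handled, Part 1 sufficiency and Part 2 are clean consequences of the Jensen $+$ Robbins--Siegmund $+$ Pinsker chain above.
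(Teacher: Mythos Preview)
Your sufficiency argument and the deduction of Part~2 from necessity are essentially the paper's proof in different clothing: the paper sets $Z_t = \ln W_t^m + U_t$ with $U_t = \sum_{s<t}\nu_s D(\mu_s\|\lambda_s)$, shows $Z$ is a submartingale bounded above by the predictable sequence $U$, and reads off both $\lim_t W_t^m > 0$ and (for Part~2) the compensator bound $\sum_t \nu_t D(\mu_t\|\bl_t) < \infty$ --- exactly your Robbins--Siegmund by-product, followed by Pinsker.

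The real gap is in necessity. Your claimed upper bound $\E(\log r_t^m \mid \F_t) \le -c'\nu_t^2\|\lambda_t - \mu_t\|^2$ (modulo summable corrections) cannot hold in an arbitrary profile: if every agent uses the same strategy $\sigma$, then $\bl_t = \lambda_t$, hence $r_t^m \equiv 1$ and $\log r_t^m \equiv 0$, while $\|\lambda_t - \mu_t\|^2$ may be bounded away from zero. The obstacle you flag is therefore misdiagnosed --- it is not the unboundedness of $r_t^m - 1$ (which localisation would indeed address) but the fact that agent $m$'s log-return carries no information about $\|\lambda_t - \mu_t\|$ unless you control what the \emph{other} agents play.

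The paper exploits precisely this freedom. Since a survival strategy must survive in \emph{every} profile, for necessity it suffices to test $\sigma$ against a single opponent who plays $\hat\sigma = (1,\mu)$. In this two-agent profile one has
\[
\bl_t - \mu_t \;=\; \frac{\nu_t W_t^{\sigma}}{W_t^{\hat\sigma} + \nu_t W_t^{\sigma}}\,(\lambda_t - \mu_t),
\]
and the \emph{same} Jensen/Pinsker lower bound you already derived --- now applied to the $\hat\sigma$-agent's log-wealth, for which $U\equiv 0$ --- shows that the compensator of that submartingale dominates a constant times $\|\lambda_t - \mu_t\|^2$, the constant being bounded away from zero because $\sigma$ survives and $\nu_t \ge \delta$. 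No second-order upper bound on $\log(1+x)$ is needed at all. With necessity repaired in this way, your Part~2 chain goes through unchanged.
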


\begin{proof}
1) Let agent $m=1$ use a strategy $\sigma=(\nu,\lambda)$ satisfying \eqref{survival-condition} and having strictly positive components $\lambda_t^n$. 
Define the sequences $U=(U_t)_{t\ge0}$ and $Z=(Z_t)_{t\ge0}$:
\[
U_0 = 0,\qquad
U_{t+1} = U_t + \nu_t \sum_{n=1}^N \mu_t^n\ln\frac{\mu_t^n}{\lambda_t^n}, \qquad
Z_t = \ln W_t^1 + U_t.
\]
Note that $U$ is non-decreasing by Gibbs' inequality. From reverse Pinsker's inequality (see, e.g., \cite{SasonVerdu15}), we find that
\[
\sum_{n=1}^N \mu_t^n\ln\frac{\mu_t^n}{\lambda_t^n} \le \frac{|\mu_t-\lambda_t|^2}{2\min_n \lambda_t^n},
\]
where $|\cdot|$ denotes the $\ell^1$-norm of a vector.
Then, in view of condition \eqref{survival-condition} and the equivalence of norms on $\R^N$, the sequence $U_t$ converges \as\ to a finite limit as $t\to\infty$.

Let us show that $Z$ is a local submartingale\footnote{Technical results on martingales needed below can be found in, e.g., \cite{Shiryaev19b}.}.
To this end, it is enough to show that $\E(Z_{t+1}^+\mid \F_t)<\infty$ and $\E(Z_{t+1} - Z_t \mid \F_t) \ge 0$.
The first inequality  follows from that $Z_{t+1} \le U_{t+1}$ and $U_{t+1}$ is $\F_t$-measurable.

Observe that the wealth dynamics equation \eqref{capital} for agent $1$ can be written as 
\[
\frac{W_{t+1}^1}{W_t^1} = \nu_t \sum_{n=1}^N \frac{\lambda_t^n}{\bl^n_t} X_{t+1}^n
+ 1-\nu_t.
\]
From here, we find
\begin{multline}
\label{logW-bound}
\ln W_{t+1}^1 - \ln W_t^1
= \ln\left(\nu_t \sum_{n=1}^N X_{t+1}^n\frac{\lambda_t^n}{\bl_t^n} + 1-\nu_t \right) \ge \nu_t \ln\left(\sum_{n=1}^N X_{t+1}^n\frac{\lambda_t^n}{\bl_t^n}\right) \\
\ge \nu_t \sum_{n=1}^N X_{t+1}^n\ln \frac{\lambda_t^n}{\bl_t^n},
\end{multline}
where in both inequalities we used the concavity of the logarithm and considered its arguments as convex combinations with coefficients $\nu_t$ and $1-\nu_t$ in the first inequality, and with coefficients $X_{t+1}^n$ in the second one.
Consequently,
\[
\E (\ln W_{t+1}^1 - \ln W_t^1 \mid \F_t) \ge \nu_t \sum_{n=1}^N \mu_t^n\ln\frac{\lambda_t^n}{\bl_t^n}.
\]
Adding $U_{t+1}-U_t$ to this inequality, we find that
\[
\E(Z_{t+1} - Z_t \mid \F_t) \ge \nu_t \sum_{n=1}^N \mu_t^n\ln\frac{\mu_t^n}{\bl_t^n} \ge 0
\]
by Gibb's inequality.
Thus, $Z$ is a local submartingale.

Since $Z$ is bounded from above by a predictable sequence ($Z_t\le U_t$), it is actually a true submartingale and there exists a finite limit $\lim_{t\to\infty} Z_t$. 
As $U$ converges, the limit of $\ln W_t^1$ also exists, which implies that $\lim_{t\to\infty} W_t^1 > 0$.
Hence, the strategy $\sigma$ is survival.
This proves the first part of claim 1.

Now suppose a strategy $\sigma$ is survival.
Then it must survive in the strategy profile  $\Sigma=(\hat\sigma, \sigma)$, where $\hat\sigma=(\hat\nu,\hl)$, $\hat\nu_t\equiv 1$, $\hl_t=\mu_t$.
For this strategy profile, put $Z_t = \ln W_t^1$.
As shown above, $Z$ is a submartingale.
Let $A$ denote its compensator, i.e. $A_t = \sum_{s=0}^{t-1} \E(Z_{s+1}-Z_s \mid \F_s)$, $A_0=0$. 

From \eqref{logW-bound}, we find
\[
\Delta A_t 
\ge \sum_{n=1}^N \mu_t^n\ln \frac{\mu_t^n}{\bl_t^n} 
\ge \frac{|\mu_t - \bl_t|^2}{2} = \left(\frac{\nu_t W^2_t}{W^1_t + \nu_tW_t^2}\right)^2 \frac{|\mu_t-\lambda_t|^2}{2},
\]
where the second inequality follows from Pinsker's inequality.

Since $Z_t$ converges, $A_t$ also converges.
Since $\nu_t\ge \epsilon'>0$ and $\liminf_{t\to\infty} W_t^2>0$, the series $\sum_{t=0}^\infty |\mu_t-\lambda_t|^2$ converges, which proves that condition \eqref{survival-condition} is necessary for survival.

\medskip
2) Consider a strategy profile in which agent 1 uses a survival strategy $\sigma=(\nu,\lambda)$ with components $\lambda_t^n$ and $\nu_t$ bounded away from zero.
As in the proof of the first claim, we find that $Z_t = \ln W_t^1 + U_t$ is a submartingale and $U_t$ converges. 
Using \eqref{logW-bound}, we can estimate the compensator $A_t$ of $Z_t$ by
\[
\Delta A_t \ge \nu_t\sum_{n=1}^N \mu_t^n \frac{\mu_t^n}{\bl_t^n} \ge \frac{\nu_t}{2} |\mu_t-\bl_t|^2.
\]
From the convergence of the compensator and the assumption that $\nu_t$ is bounded away from zero, we obtain the convergence of the series $\sum_{t=0}^\infty |\mu_t - \bl_t|^2$, which implies the second claim of the theorem.
\end{proof}

\subsection{The stationary case}

Now we consider a particular case of the general model in which the game is driven by a stationary sequence and prove a stronger version of Theorem~\ref{th1}.

Let us introduce the following assumption.

\begin{assumption}
\label{stationary-assumption}
Let the sequence of random vectors $X_t$ satisfy the following properties.
	
\begin{enumerate}[label=(\alph*),widest=a,itemsep=0mm,topsep=0mm,leftmargin=*]
\item There exists a stationary ergodic Markov sequence $s=(s_t)_{t\ge1}$ with values in some measurable space $\mathcal{S}$ such that all $X_t$ functionally depend on $s_t$, i.e.\ $X_t = X(s_t)$, where $X\colon\mathcal{S} \to \simp^N$ is a non-random function (the sequence $s_t$ can be interpreted as a sequence of ``states of the world'');
		
\item the components $X_t^n$, $n=1,\dots,N$, are not conditionally linearly dependent, i.e.\ any non-trivial linear combination of them with $\sigma(s_{t-1})$-measurable coefficients is not a null random variable.
\end{enumerate}
\end{assumption}

Let $\mu(s) = \E(X_{t+1} \mid s_t = s)$. 
According to the previous section, any strategy $\sigma=(\nu,\lambda)$ with $\lambda_t = \mu(s_t)$ is survival.
Our next goal will be to prove the following stronger result.
Suppose a strategy profile consists of strategies $\sigma=(\nu,\lambda)$ with $\nu$ and $\lambda$ functionally dependent on $s_t$, i.e.\ $\nu_t=\nu(s_t)$, $\lambda_t=\lambda(s_t)$, and $\nu_t$ bounded away from zero.
We will show that if at least one agent uses a strategy with $\lambda_t = \mu(s_t)$, then the relative wealth of any agent with a strategy such that $\P(\lambda_t \neq \mu(s_t)) > 0$ vanishes asymptotically.

\begin{theorem}
Let Assumption~\ref{stationary-assumption} hold.
Consider a strategy profile $\Sigma=(\sigma^1,\dots,\sigma^N)$ in which every agent uses a strategy $\sigma^m=(\nu^m,\lambda^m)$ such that the components $\lambda_t^{mn}$ and $\nu_t^m$ are bounded away from zero and can be represented in the form $\nu_t^m = \nu^m(s_t)$, $\lambda_t^m = \lambda^m(s_t)$ for non-random functions $\nu^m$, $\lambda^m$.
	
Suppose $\lambda^1(s)=\mu(s)$.
Then $\lim_{t\to\infty} W_t^m = 0$ \as\ for any agent $m$ such that $\P(\lambda^m(s_t) \neq \mu(s_t)) > 0$.
\end{theorem}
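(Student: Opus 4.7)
The plan is to show that for any agent $m$ with $\P(\lambda^m(s_t)\ne\mu(s_t))>0$, the log-wealth $\ln W_t^m$ drifts to $-\infty$ at linear rate, combining Theorem~\ref{th1} with Birkhoff's ergodic theorem and the strict concavity of the logarithm. The starting point: agent~$1$'s strategy $\lambda^1=\mu$ is survival (survival follows from Theorem~\ref{th1}(1) because $\sum_t\|\lambda^1_t-\mu_t\|^2\equiv 0$), and since all $\nu^k,\lambda^{kn}$ are bounded away from zero by hypothesis, Theorem~\ref{th1}(2) applies and yields $\sum_t\|\bl_t-\mu(s_t)\|^2<\infty$ a.s., so in particular $\bl_t-\mu(s_t)\to 0$ a.s.

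The key decomposition comes from equation~\eqref{capital}:
\[
\ln W_{t+1}^m - \ln W_t^m = \Phi(s_t,\bl_t,s_{t+1}),\qquad \Phi(s,\bl,s'):=\ln\!\left(\nu^m(s)\sum_{n=1}^N\frac{\lambda^{mn}(s)}{\bl^n}X^n(s')+1-\nu^m(s)\right).
\]
Because each agent's strategy is a function of $s_t$ alone and $(s_t)$ is Markov, the conditional expectation given $\F_t$ equals $g(s_t,\bl_t)$ with $g(s,\bl):=\E[\Phi(s,\bl,s_{t+1})\mid s_t=s]$, and one can write $\ln W_t^m=\ln W_0^m+\sum_{r<t}g(s_r,\bl_r)+M_t$ with $M_t$ a martingale. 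I would then show $g(s,\mu(s))<0$ whenever $\lambda^m(s)\ne\mu(s)$: strict concavity of $\log$ makes $g(s,\mu(s))\le 0$, with equality only if the random variable $\sum_n(\lambda^{mn}(s)/\mu^n(s))X^n(s_{t+1})$ is a.s.\ constant given $s_t=s$. Using $\sum_n X^n=1$ this becomes $\sum_n(\lambda^{mn}(s)/\mu^n(s)-1)X^n(s_{t+1})=0$ a.s., which by Assumption~\ref{stationary-assumption}(b) forces $\lambda^m(s)=\mu(s)$. Hence under the stationary distribution $\pi$ of $s_t$ the constant $c:=\E_\pi g(s,\mu(s))$ is strictly negative.

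To conclude, I would combine three ingredients. First, Birkhoff's ergodic theorem yields $\tfrac{1}{t}\sum_{r<t}g(s_r,\mu(s_r))\to c<0$ a.s. Second, the lower bounds on $\nu^m,\lambda^{mn}$ (together with the induced lower bound $\bl_t^n\ge\min_k\lambda^{kn}_t$) make $g(s,\bl)$ Lipschitz in $\bl$ with a constant uniform in $s$ on the relevant domain, so $g(s_r,\bl_r)-g(s_r,\mu(s_r))\to 0$ a.s.\ and its Cesaro average also vanishes. Third, the same uniform bounds make the increments of $M_t$ bounded, so $M_t/t\to 0$ a.s.\ by the strong law for martingales with bounded differences. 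Putting these together gives $\ln W_t^m/t\to c<0$, so $W_t^m\to 0$ a.s.

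The hard part is the middle step: translating the hypothesis $\P(\lambda^m(s_t)\ne\mu(s_t))>0$ into strict inequality in Jensen. The crucial move is the algebraic reduction of the equality case to a null linear combination of the components of $X_{t+1}$ and the appeal to Assumption~\ref{stationary-assumption}(b); this is the only place where the non-degeneracy of the components enters, and everything else reduces to standard martingale and ergodic-theory machinery.
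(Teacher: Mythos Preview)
Your proposal is correct and follows essentially the same route as the paper: both decompose the log-wealth into a bounded-increment martingale part (handled by the martingale SLLN) plus a drift part, reduce the drift to a stationary functional of $s_t$ via Theorem~\ref{th1} (you use Lipschitz continuity of $g(s,\bl)$ in $\bl$, the paper uses an explicit inequality $\E(D_{t+1}\mid\F_t)\ge V_t+g(s_t)$ with $V_t\to 0$), and then apply Birkhoff's ergodic theorem together with the strict-Jensen argument based on Assumption~\ref{stationary-assumption}(b). The only cosmetic difference is that the paper works with the ratio $\ln(W_t^1/W_t^m)$ and shows $\liminf t^{-1}\ln(W_t^1/W_t^m)>0$, whereas you analyse $\ln W_t^m$ directly and obtain the slightly sharper conclusion $t^{-1}\ln W_t^m\to c<0$.
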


\begin{proof}
Without loss of generality, we will assume that the underlying filtration $(\F_t)_{t\ge 0}$ is generated by the sequence $s_t$.

We have $\liminf_{t\to\infty} W_t^1 >0$ since the strategy of agent 1 is survival. Hence it will be enough to show that $\lim_{t\to\infty} W_t^1/W_t^m = \infty$.
For that end, we will prove the inequality
\[
\liminf_{t\to\infty} \frac{1}{t} \ln \frac{W_t^1}{W_t^m} > 0\ \as
\]
Denote $D_t = \ln(W_t^1/W_t^m) - \ln (W_{t-1}^1/W_{t-1}^m)$.
Then we have the obvious representation
\begin{equation}
\label{logW}
\frac{1}{t} \ln \frac{W_{t+1}^1}{W_{t+1}^m}
= \frac1t \ln\frac{W_0^1}{W_0^m}
+ \frac 1t \sum_{u=0}^t(D_{u+1} - \E(D_{u+1}\mid\F_u))
+ \frac1t\sum_{u=0}^t \E(D_{u+1}\mid \F_u).
\end{equation}
We will show that the limit of the second term in the right-hand side is zero, and the limit inferior of the third term is strictly positive.

As for the second term, we have
\begin{equation}
\label{D-representation}
D_{t+1} = \ln\left(\nu_t^1 \sum_{n=1}^N X_{t+1}^n\frac{\mu^n_t}{\bl_t^n} + 1-\nu_t^1\right) - \ln\left(\nu_t^m \sum_{n=1}^N X_{t+1}^n\frac{\lambda^{mn}_t}{\bl_t^n} + 1-\nu_t^m\right).
\end{equation}
Since the components of all strategies are bounded away from zero, $D_t$ is uniformly bounded, so the sequence $\xi_t  = \sum_{u=0}^t(D_{u+1} - \E(D_{u+1}\mid\F_u))$ is a zero-mean martingale with bounded increments.
Then $\lim_{t\to\infty} \xi_t/t = 0$ \as\ by the strong law of large numbers for martingales.

Now consider the third term in the right-hand side of \eqref{logW}.
In order to prove that its limit inferior is strictly positive, we will show that there exists a random sequence $V_t$ and a function $g(s)$ such that $\lim_{t\to\infty} V_t = 0$ \as, $\E g(s_t) > 0$, and for all $t\ge 1$
\begin{equation}
\label{D-bound}
\E(D_{t+1}\mid \F_t) \ge V_t + g(s_t).
\end{equation}
If this is so, then the rest of the proof will follow from the convergence of the Ces\`aro sum (applied to $V_t$) and the ergodic theorem (applied to $g(s_t)$).

Using the concavity of the logarithm and Pinsker's inequality in \eqref{D-representation}, one can see that the conditional expectation of the first logarithm is non-negative, and therefore
\begin{multline*}
\E(D_{t+1}\mid \F_t)
\ge - \E\left(\ln\left(\nu_t^m \sum_{n=1}^N X_{t+1}^n\frac{\lambda^{mn}_t}{\bl_t^n}
+ 1-\nu_t^m \right)\;\Bigg|\; \F_{t}\right) \\\ge
-\E \Biggl( \ln \Biggl(\nu_t^m \sum_{n=1}^N X^n_{t+1} \frac{\lambda^{mn}(s_t)}{\mu^n(s_t)} +1-\nu_t^m\Biggr) \;\Bigg|\; \F_t\Biggr) - 
\ln \left(\max_{n=1,\dots,N} \frac{\mu^n(s_t)}{\bl_t^n}\right).
\end{multline*}
This implies that inequality \eqref{D-bound} holds for the sequence
\[
V_t =  - \ln \left(\max_{n=1,\dots,N} \frac{\mu^n(s_t)}{\bl_t^n}\right)
\]
and the function
\[
g(s) = -\E \Biggl( \ln \Biggl(\nu^m(s) \sum_{n=1}^N X^n(s_{t+1}) \frac{\lambda^{mn}(s)}{\mu^n(s)} +1-\nu^m(s)\Biggr) \;\Bigg|\; s_t=s\Biggr).
\]
By Theorem~\ref{th1}, we have $\lim_{t\to\infty}\|\mu(s_t) - \bl_t\|=0$ \as\ and hence $\lim_{t\to\infty} V_t = 0$.
Let us show that $\E g(s_t)>0$.

Observe that Assumption \ref{stationary-assumption}(b) implies that for any $t\ge0$ and any $\sigma(s_t)$-measurable random variables $c_1,\dots,c_N$ such that $\P(c_i\neq c_j)>0$ for at least one pair $(i,j)$, the random variable $c_0 = \sum_{n=1}^N c_n X_{t+1}^n$ is not $\sigma(s_t)$-measurable.
Indeed, otherwise we would have $\sum_{n=1}^N (c_n-c_0) X_{t+1}^n = 0$ and, hence, all $c_n$ would be equal to $c_0$ by Assumption \ref{stationary-assumption}(b), which is a contradiction.

Consequently, using Jensen's inequality, we find
\begin{multline*}
\E g(s_t)
= -\E  \ln \Biggl(\nu^m(s_t) \sum_{n=1}^N X^n(s_{t+1}) \frac{\lambda^{mn}(s_t)}{\mu^n(s_t)} +1-\nu^m(s_t)\Biggr) \\
> -\ln \E \Biggl(\nu^m(s_t) \sum_{n=1}^N X^n(s_{t+1}) \frac{\lambda^{mn}(s_t)}{\mu^n(s_t)} +1-\nu^m(s_t)\Biggr) = 0,
\end{multline*}
where the strict inequality takes place in view of the strict concavity of the logarithm and that its argument is not constant.
Thus, $\E g(s_t) > 0$, which finishes the proof.
\end{proof}

\section{Extension: a model with continuous-time forecasts}
\label{sec:extended}

The model considered above has a drawback: it provides only a one-shot market forecast of the vector $X_{t+1}$ based on the information derived from the agents' bets at time $t$, but does not take into account additional information that may appear in the interval $(t, t+1)$.
Now we propose a generalization of our model that addresses this issue.

Let the underlying probability space be equipped with a continuous-time filtration $\FF=(\F_t)_{t\ge0}$.
As before, we will assume that the values of the vectors $X_t$ become known at integer moments of time.
However, in the new model, the agents may place bets continuously in time according to the scheme described below.

By \emph{round $t$} of the game, let us call the interval of time $[t,t+1)$.
In the beginning of a round, the agents must choose proportions $\nu_t^m$ of their wealth they stake in this round.
The amount of money $\nu_t^m W_t^m$ is locked on agent $m$'s account.
Then the operator of the game gradually debits the agent's account and places bets on the components of the vector $X_{t+1}$ according to the agent's strategy $\lambda_s^m = (\lambda_s^{m1},\dots,\lambda_s^{mN})$, $s\in[t,t+1)$.
The process $\lambda_s^{mn}$ specifies the intensity of betting on the $n$-th component of $X_{t+1}$.
The agent is allowed to change the vector $\lambda_s^m$ dynamically during the round.
The processes $\lambda^{mn}_s$ must be adapted to the filtration $\FF$.

Based on this description, define the wealth dynamics in the game by the equation
\begin{equation}
\label{capital-extended}
W_{t+1}^m = \sum_{k=1}^M \nu_t^k W_t^k \cdot \sum_{n=1}^N X_{t+1}^n \int_t^{t+1}  \frac{\lambda_s^{mn}\nu_t^m W_t^m}{\sum_{k=1}^M \lambda_s^{kn}\nu_t^k W_t^k} \,d G(s) + (1-\nu_t^m)W_t^m, \quad t=0,1,\dots,
\end{equation}
where $G(t)$ is a function that specifies the speed with which the agents' accounts are debited by the operator.
We will assume that $G(t)$ is non-random, non-decreasing, continuous on the right with left-hand limits, and $G(t) = t$ for integer $t$.
The integral in formula \eqref{capital-extended} is understood in the Lebesgue-Stieltjes sense over the interval $[t,t+1)$. 

The function $G(t)$ is chosen by the operator and is made known to all agents.
Two basic examples are $G(t) = t$ and $G(t) = [tk]/k$, where $k\in\mathbb{N}$ is a fixed constant.
In the first case, the bets are placed continuously, while in the second one they are placed at moments of time $t+i/k$, $i=0,\dots,k$.
For $k=1$, we obtain the model from Section~\ref{sec:model}.

Let us define the \textit{wealth-weighted strategy} of the agents as $\bar\sigma = (\bn, \bl)$ with
\[
\bn_t = \sum_{m=1}^M \nu_t^m W_t^m,\quad t=0,1,\dots, \qquad
\bl_t^n = \frac{1}{\bn_{[t]}}\sum_{m=1}^M \nu_{[t]}^m \lambda_t^{mn} W_{[t]}^m, \quad t\in \R_+.
\]
We call $\bl_s$, $s\in[t,t+1)$, the \emph{market forecast process} of the vector $X_{t+1}$.
We call a strategy $\sigma=(\nu,\lambda)$ \emph{survival}, if in any strategy profile, in which agent $m$ uses this strategy, it holds that $\liminf_{t\to\infty} W_t^m > 0$.

Define the continuous-time conditional expectation process
\[
\mu_t = \E(X_{[t]+1} \mid \F_t).
\]
In what follows, let us assume that $\mu_t^n\ge\epsilon>0$ for all $t\ge 0$, $n=1,\dots,N$.

The next result is an analogue of Theorem~\ref{th1}.

\begin{theorem}
1) Consider a strategy $\sigma=(\nu,\lambda)$ with components $\lambda_t^n$ bounded away from zero.
Then $\sigma$ is survival if
\begin{equation}
\label{survival-condition-ext}
\int_0^\infty \|\lambda_t - \mu_t\|^2 d G(t)< \infty\ \as
\end{equation}
Moreover, if both $\lambda_t^n$ and $\nu_t$ are bounded away from zero, then condition \eqref{survival-condition-ext} is also necessary for $\sigma$ to be survival.

\smallskip
2) Suppose some agent uses a survival strategy $\sigma=(\nu,\lambda)$ with components $\nu_t$ and $\lambda_t^n$ bounded away from zero.
Then 
\[
\int_0^\infty \|\bl_t-\mu_t\|^2 d G(t)< \infty\ \as
\]
\end{theorem}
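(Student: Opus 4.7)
My plan is to mirror the proof of Theorem~\ref{th1} with three new ingredients tailored to the continuous-time setting: (i) Jensen's inequality applied to the $dG$-average over the round $[t,t+1)$, where $dG$ acts as a probability measure because $G(t+1)-G(t)=1$; (ii) a conditional Fubini step that uses the tower property through $\F_t\subset\F_s$ together with the identity $\E(X_{t+1}^n\mid\F_s)=\mu_s^n$ valid for every $s\in[t,t+1)$; and (iii) the standard equivalence lemma for nonnegative, uniformly bounded, adapted sequences $(Y_u)$: $\sum_u Y_u<\infty$ a.s.\ if and only if $\sum_u\E(Y_u\mid\F_u)<\infty$ a.s., applied here with $Y_u$ supported on the round $[u,u+1)$.

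I would begin by rewriting \eqref{capital-extended} as $W_{t+1}^m/W_t^m=\nu_t^m\sum_n X_{t+1}^n\int_t^{t+1}\lambda_s^{mn}/\bl_s^n\,dG(s)+(1-\nu_t^m)$ and applying Jensen three times---to the weights $(\nu_t^m,1-\nu_t^m)$, to $(X_{t+1}^n)_n$, and to the $dG$-average---to arrive at the continuous-time analogue of \eqref{logW-bound}:
\[
\ln\frac{W_{t+1}^m}{W_t^m}\;\ge\;\nu_t^m\sum_{n=1}^N X_{t+1}^n\int_t^{t+1}\ln\frac{\lambda_s^{mn}}{\bl_s^n}\,dG(s).
\]
Taking $\E(\cdot\mid\F_t)$, invoking Fubini together with $\E(X_{t+1}^n\mid\F_s)=\mu_s^n$, and splitting $\mu_s^n\ln(\lambda_s^{mn}/\bl_s^n)=\mu_s^n\ln(\mu_s^n/\bl_s^n)-\mu_s^n\ln(\mu_s^n/\lambda_s^{mn})$ (both sums over $n$ nonnegative by Gibbs) lets me introduce the predictable nondecreasing process
\[
\Delta U_{t+1}\;=\;\nu_t^m\,\E\Big(\int_t^{t+1}\sum_n\mu_s^n\ln\frac{\mu_s^n}{\lambda_s^{mn}}\,dG(s)\;\Big|\;\F_t\Big),
\]
so that $\tilde Z_t:=\ln W_t^m+U_t$ becomes a submartingale bounded above by the predictable sequence $U_t$, hence a true submartingale. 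For sufficiency in part~1), reverse Pinsker bounds the integrand by $\|\lambda_s^m-\mu_s\|^2/(2\min_n\lambda_s^{mn})$, which is uniformly bounded since $\lambda$ is bounded away from zero and $\lambda,\mu\in\simp^N$; the equivalence lemma then converts the pathwise hypothesis \eqref{survival-condition-ext} into $U_\infty<\infty$ a.s., and convergence of $\tilde Z$ yields convergence of $\ln W_t^m$, hence survival.

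For the necessity half I would adapt the discrete-case trick: pair $\sigma$ against an opponent with $\hat\nu\equiv 1$ and $\hat\lambda_s=\mu_s$, so that $Z_t=\ln W_t^m$ is itself a submartingale bounded above by $0$---hence convergent with a convergent compensator $B$. Ordinary Pinsker gives $\Delta B_t\ge \tfrac12\E\bigl(\int_t^{t+1}\|\mu_s-\bl_s\|^2\,dG(s)\bigm|\F_t\bigr)$; on the other hand the two-agent formula yields $\mu_s-\bl_s=(\nu_{[s]}^mW_{[s]}^m/\bn_{[s]})(\mu_s-\lambda_s^m)$, so the lower bounds on $\nu$ together with survival of $W^m$ convert this into a multiple of $\E\bigl(\int_t^{t+1}\|\mu_s-\lambda_s^m\|^2\,dG(s)\bigm|\F_t\bigr)$, and the converse direction of the equivalence lemma delivers \eqref{survival-condition-ext}. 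Part~2) is then immediate by applying the same Pinsker bound inside the compensator of the surviving agent. The principal obstacle is precisely the non-predictability of $\lambda_s$ inside a round: the Jensen--Fubini combination together with the two-sided equivalence lemma is exactly what allows a round-trip between the pathwise integrals appearing in the theorem's statement and the predictable compensator on which submartingale convergence operates; a secondary but necessary bookkeeping point is checking that the integrand driving $U$ is pathwise bounded on each round, which is where the lower bounds on $\nu_t^m$ and $\lambda_t^{mn}$ genuinely enter.
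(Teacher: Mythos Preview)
Your argument is correct and mirrors the paper's proof. One simplification the paper makes: it defines $\Delta U_{t+1}=\nu_t\int_t^{t+1}\sum_n\mu_s^n\ln(\mu_s^n/\lambda_s^n)\,dG(s)$ \emph{without} the outer $\E(\cdot\mid\F_t)$, so $U$ is merely adapted rather than predictable; then reverse Pinsker together with \eqref{survival-condition-ext} yields $U_\infty<\infty$ directly, and your equivalence lemma~(iii) is needed only in the necessity direction and in part~2) (the paper phrases that step as ``$C_t$ is the compensator of the nonnegative submartingale $B_t$''). One indexing slip in your necessity paragraph: the submartingale $Z_t$ must be the log-wealth of the \emph{opponent} playing $(\hat\nu,\hat\lambda)=(1,\mu)$, not of the agent using $\sigma$---the factor $\nu^m W^m/\bn$ and the survival hypothesis do correctly refer to the $\sigma$-agent, but $Z$ itself tracks the other player.
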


\begin{proof}
By and large, we repeat the proof of Theorem~\ref{th1}.

1) Suppose agent $m=1$ uses a strategy $\sigma=(\nu,\lambda)$ with $\lambda_t^n$ bounded away from zero and such that \eqref{survival-condition-ext} holds. 
It will be enough to show that the sequence $Z_t = \ln W_t^1 + U_t$ is a local submartingale, where $U_t$ is a non-decreasing convergent sequence defined by
\[
U_{t+1} = U_t + \nu_t \sum_{n=1}^N \int_t^{t+1} \mu_s^n\ln\frac{\mu_s^n}{\lambda_s^n}\,d G(s), \qquad
U_0 = 0.
\]
Using that the logarithm is concave, we obtain the bound
\begin{multline*}
\ln W_{t+1}^1 - \ln W_t^1 = \ln\left(\nu_t \sum_{n=1}^N X_{t+1}^n\int_t^{t+1}\frac{\lambda_s^n}{\bl_s^n}\,dG(s) + 1-\nu_t \right) \\
\ge \nu_t \ln\left(\sum_{n=1}^N X_{t+1}^n\int_t^{t+1}\frac{\lambda_s^n}{\bl_s^n}\,dG(s)\right) 
\ge \nu_t \sum_{n=1}^N X_{t+1}^n\int_t^{t+1}\ln\frac{\lambda_s^n}{\bl_s^n}\,dG(s).
\end{multline*}
A standard argument using the tower property of conditional expectation then yields
\[
\E(Z_{t+1} - Z_t \mid \F_t)
\ge \nu_t\,\E \left(\int_t^{t+1}\sum_{n=1}^N \lambda_s^n\ln\frac{\lambda_s^n}{\bl_s^n}\,dG(s) \;\Bigg|\; \F_{t} \right)
\ge 0.
\]
Therefore, $Z$ is a local submartingale, which implies that $\sigma$ is a survival strategy by the same reasoning as in the proof of Theorem~\ref{th1}.

If $\sigma$ is a survival strategy, let us place it in the profile $\Sigma =(\hat\sigma, \sigma)$, where $\hat\sigma=(\hat\nu,\hl)$, $\hat\nu_t\equiv 1$, $\hl_t=\mu_t$.
Then the sequence $Z_t := \ln W_t^1$ is a convergent submartingale.
Using Pinsker's inequality, we obtain the following bound for its compensator:
\begin{multline*}
\Delta A_t 
\ge \E \left(\int_t^{t+1}\frac{|\mu_s - \bl_s|^2}{2}\,dG(s)\;\Bigg|\; \F_{t} \right) \\
= \left(\frac{\nu_t W^2_t}{W^1_t + \nu_tW_t^2}\right)^2\E \left(\int_t^{t+1} \frac{|\mu_s-\lambda_s|^2}{2}\,dG(s)\;\Bigg|\; \F_{t} \right).
\end{multline*}
The convergence of $Z$ and $A$ then implies the convergence of the series
\begin{equation}
\label{another-compensator}
C_t = \sum_{s=0}^{t-1}\E\left(\int_s^{s+1} |\mu_u-\lambda_u|^2\,dG(u)\;\Bigg|\; \F_{s}\right).
\end{equation}
Observe that $C_t$ is the compensator of the discrete-time non-negative submartingale $B_t = \int_0^t |\mu_u-\lambda_u|^2\,dG(u)$, and hence this submartingale converges.
This yields the second part of the first claim of the theorem.

\medskip
2) If agent 1 uses a survival strategy $\sigma=(\nu,\lambda)$ with $\lambda_t^n$ and $\nu_t$ bounded away from zero, then $Z_t = \ln W_t^1 + U_t$ is a convergent submartingale and we have the following bound  for its compensator obtained in the same way as before:
\[
\Delta A_t  \ge \frac{\nu_t}{2} \E\left(\int_t^{t+1}|\mu_s-\bl_s|^2\,dG(s)\;\Bigg|\; \F_{t}\right).
\]
Arguing as in \eqref{another-compensator}, we find that $\int_0^\infty |\mu_t-\bl_t|^2\,dG(t) < \infty$.
\end{proof}

\phantomsection
\addcontentsline{toc}{section}{References}
\section*{Acknowledgements}
This research was supported by Vega Institute Foundation.

\phantomsection
\addcontentsline{toc}{section}{References}
\bibliographystyle{apalike}
\bibliography{prediction}

\end{document}